\newtheorem{letterthm}{Theorem}
\newtheorem{theo}{Theorem}[section]
\newtheorem{lemma}[theo]{Lemma}
\newtheorem{prop}[theo]{Proposition}
\theoremstyle{definition} 
\newtheorem{defi}[theo]{Definition}
\newcommand{\diagtaul}{\tau_l(x)=\begin{tikzpicture}[baseline=-.1cm]\draw (-.25,-.25)--(.25,-.25)--(.25,.25)--(-.25,.25)--(-.25,-.25);\draw (0,.25) arc (0 : 180 : .25);\draw (-.5,-.25) arc (-180 : 0 : .25);\node at (0,0) { $x$ };\draw (-.5,-.25)--(-.5,.25);\end{tikzpicture}}
\newcommand{\diagtaur}{\tau_r(x)=\begin{tikzpicture}[baseline=-.1cm]\draw (-.25,-.25)--(.25,-.25)--(.25,.25)--(-.25,.25)--(-.25,-.25);\draw (.5,.25) arc (0 : 180 : .25);\draw (0,-.25) arc (-180 : 0 : .25);\node at (0,0) { $x$ };\draw (.5,-.25)--(.5,.25);\end{tikzpicture}}
\newcommand{\diagxdag}{\begin{tikzpicture}[baseline = .425cm]\draw (.25,.25)--(.75,.25)--(.75,.75)--(.25,.75)--(.25,.25);\draw (.5,0)--(.5,.25);\draw (.5,.75)--(.5,1);\draw (0,.5)--(.25,.5);\draw (.75,.5)--(1,.5);\node at (.5,.5) { $x^*$ };\node at (.85,.85) {\tiny\$};\end{tikzpicture}}
\newcommand{\diagmult}{xy = \sum_{a=0}^{ \min(2n,2m) }\begin{tikzpicture}[baseline=.4cm]
\draw (-.75,0)--(-.25,0)--(-.25,.5)--(-.75,.5)--(-.75,0);\draw (.75,0)--(.25,0)--(.25,.5)--(.75,.5)--(.75,0);\draw (-.625,.5)--(-.625,1);\draw (.625,.5)--(.625,1);\draw (.375,.5) arc (0 : 180 : .375);\node at (-.5,.25) { $ x $ };\node at (.5,.25) { $ y $ };\node at (0 , 1) {{ \scriptsize{$a$} }};\end{tikzpicture}}
\newcommand{\diagmultbullet}{x\bullet y = \begin{tikzpicture}[baseline=.4cm]\draw (-.75,0)--(-.25,0)--(-.25,.5)--(-.75,.5)--(-.75,0);\draw (.5,0)--(0,0)--(0,.5)--(.5,.5)--(.5,0);\draw (-.5,.5)--(-.5,1);\draw (.25,.5)--(.25,1);\node at (-.5,.25) { $ x $ };\node at (.25,.25) { $ y $ };\end{tikzpicture}}
\newcommand{\AAP}{absorbing amenability property}
\newcommand{\AOP}{asymptotic orthogonality property}
\newcommand{\cusu}{cup subalgebra}
\newcommand{\FGF}{free group factor}
\newcommand{\maam}{maximal amenable}
\newcommand{\MASA}{maximal abelian subalgebra}
\newcommand{\plal}{planar algebra}
\newcommand{\suplal}{subfactor planar algebra}
\newcommand{\SAOP}{strong asymptotic orthogonality property}
\newcommand{\sufa}{subfactor}
\newcommand{\vna}{von Neumann algebra}
\newcommand{\AM}{A\subset M}
\newcommand{\bu}{\bullet}
\newcommand{\C}{\mathbb C}
\newcommand{\ep}{\varepsilon}
\newcommand{\gO}{\geqslant 0}
\newcommand{\LA}{L^2(A)}
\newcommand{\LM}{L^2(M)}
\newcommand{\limni}{\lim_{n\rightarrow\infty}}
\newcommand{\loriar}{\longrightarrow}
\newcommand{\Pl}{\mathcal P}
\newcommand{\spann}{\text{Span}}
\begin{document}
\title{The cup subalgebra has the absorbing amenability property}
\maketitle
\begin{center}
{\sc by Arnaud Brothier\footnote{Vanderbilt University, Department of Mathematics, 1326 Stevenson Center Nashville, TN, 37240, USA,\\ arnaud.brothier@vanderbilt.edu} and Chenxu Wen \footnote{Vanderbilt University, Department of Mathematics, 1326 Stevenson Center Nashville, TN, 37240, USA,\\ chenxu.wen@vanderbilt.edu}}
\end{center}

\begin{abstract}\noindent
Consider an inclusion of diffuse \vna s $\AM$.
We say that $\AM$ has the \AAP\ if for any diffuse subalgebra $B\subset A$ and any amenable intermediate algebra $B\subset D\subset M$ we have that $D$ is contained in $A.$
We prove that the \cusu\ associated to any \suplal\ has the \AAP.
\end{abstract}

\section*{Introduction and main results}
Amenability is a fundamental concept in various area of mathematics.
Connes proved the striking result that a \vna\ is amenable if and only if it is hyperfinite \cite{Connes_76_classification_inj_factors}.
In this article, we study \maam\ subalgebras.
Fuglede and Kadison showed that any II$_1$ factor contains a \maam\ \sufa\ \cite{Fuglede_Kadison_ConjMvN}.
Popa exhibited the first example of an abelian \maam\ subalgebra of a II$_1$ factor, thus giving a counter-example to a question of Kadison \cite{popa_max_inj}.
He defined the notion of \AOP\ (AOP) and showed that a singular \MASA\ (masa) with the AOP is \maam.
Many other examples have been given using the same strategy \cite{Ge_maxinj,Shen_maxinj,Cameron_Fang_Ravichandran_White_10_max_inj,Brot_GJSW_max_inj,Houdayer_12_maxinj,Boutonnet_Carderi_maxamI}.
A completely new strategy in proving maximal amenability has been given in \cite{Boutonnet_Carderi_maxamII}.

Peterson conjectured that any maximal amenable subalgebra of a \FGF\ is the unique maximal amenable extension of any of its diffuse subalgebra.
Inspired by this question and the work of Houdayer on maximal Gamma extensions \cite{Houdayer_Bogoljubov,Houdayer_Gamma}, we consider the notion of \AAP\ (AAP).
An inclusion of \vna s $\AM$ has the AAP if for any diffuse subalgebra $B\subset A$ and any amenable intermediate algebra $B\subset D\subset M$ we have that $D$ is contained in $A.$
In particular, if $A$ is amenable, then it is maximal amenable.
Houdayer proved that the generator masa has the AAP \cite{Houdayer_Gamma}.
The second author showed that the radial masa has the AAP \cite{Wen_Radial}.

In this article, we present a new class of examples that have the AAP.
Those examples are constructed with Jones \plal s \cite{Jones_planar_algebra}.
If $\Pl$ is a \suplal, then we can associate to it a II$_1$ factor $M$ \cite{GJS_1}.
This II$_1$ factor is isomorphic to an interpolated \FGF\ $L(\mathbb F_t)$ where $t$ is a linear combination of the index and the global index of $\Pl$ \cite{Dykema_LF_t,Radulescu_Random_matrices_LF_t,GJS_2,Hartglass_13_GJS}.
This factor admits a generic abelian subalgebra $\AM$ that we call the \cusu. 
The first author previously proved that the \cusu\ is \maam\ \cite{Brot_GJSW_max_inj}.
We prove the following theorem:
\begin{letterthm}\label{theo:A}
The \cusu\ associated to any \suplal\ has the \AAP.
\end{letterthm}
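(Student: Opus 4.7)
The plan is to prove a \SAOP\ (SAOP) for the inclusion $\AM$: namely, Popa's asymptotic orthogonality holds relative to every diffuse subalgebra $B\subset A$, not just with respect to $A$ itself. The AAP will then follow by a standard argument going back to Houdayer \cite{Houdayer_Gamma} and refined by the second author \cite{Wen_Radial}. The first author already established the ordinary \AOP\ of the \cusu\ in \cite{Brot_GJSW_max_inj}, which only yields maximal amenability; the substance of the present theorem is the upgrade to the strong version relative to arbitrary diffuse $B\subset A$.

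\textbf{Main steps.} First, I would set up the GJS model $M=\bigoplus_n \Pl_{n,+}$ with the planar multiplication, adjoint and trace dictated by the planar algebra $\Pl$, and identify $A$ as the abelian subalgebra generated by the natural cup embeddings of $\Pl_{0,+}$. Second, I would produce an explicit orthonormal basis of $\LM\ominus\LA$ as an $A$--$A$ bimodule, indexed by ``reduced'' planar tangles carrying no cup on the top or bottom strand; this is the natural extension of the basis used implicitly in \cite{Brot_GJSW_max_inj}. Third, for any diffuse subalgebra $B\subset A$ and any sequence of unitaries $u_n\in B$ converging weakly to zero, I would show that $\langle u_n \xi, \eta u_n\rangle \to 0$ for every pair of basis vectors $\xi,\eta\in\LM\ominus\LA$; the diagrammatic computation here adapts the template of \cite{Brot_GJSW_max_inj} but must accommodate arbitrary weakly null $B$-sequences rather than generic elements of $A$. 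Finally, from SAOP one deduces the AAP: if $B\subset D\subset M$ with $D$ amenable, amenability furnishes a $D$-central state $\Phi$ on a suitable basic construction, and SAOP applied against $(u_n)$ forces every $x\in D$ to satisfy $x=E_A(x)$, yielding $D\subset A$.

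\textbf{Main obstacle.} The decisive step is the diagrammatic verification of SAOP for an \emph{arbitrary} diffuse $B\subset A$. The \cusu\ $A$ is isomorphic to an interpolated \FGF\ sitting inside $M$ in a highly non-trivial way, so diffuse subalgebras of $A$ can be quite unstructured, and one cannot rely on any specific spectral features of their unitaries beyond weak convergence to zero. The challenge is to extract from mere diffuseness enough mixing to control the action of $(u_n)$ on every reduced-tangle basis vector of $\LM\ominus\LA$, uniformly in the cup-stacking combinatorics. I expect the argument to proceed either by an approximation step reducing the general case to the one already treated in \cite{Brot_GJSW_max_inj}, or by a refined moment estimate that bypasses the need for generic $A$-elements altogether and directly exploits the $A$-bimodularity of the expansion of $u_n\xi$ in the planar-algebra basis.
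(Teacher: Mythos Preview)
Your overall architecture---prove SAOP for $\AM$, then invoke the Houdayer--Wen criterion to get AAP---is exactly the paper's. But two points need correction.

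First, a factual slip: the \cusu\ $A$ is \emph{abelian} (it is generated by the single self-adjoint element $\cup$), not an interpolated \FGF. It is $M$ that is the free group factor. This matters because the difficulty you anticipate in the ``main obstacle'' paragraph is overstated: $A\simeq L^\infty[0,1]$, and the only feature of a diffuse $B\subset A$ one ever uses is the existence of a weakly-null unitary sequence.

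Second, and more substantively, your Step~3 is not SAOP. Showing $\langle u_n\xi,\eta u_n\rangle\to 0$ for weakly-null $u_n\in B$ and fixed $\xi,\eta\in\LM\ominus\LA$ is a \emph{mixing} statement for the $B$-bimodule $\LM\ominus\LA$; once you know (via the JSW isomorphism $\LM\ominus\LA\simeq\LA\otimes V\otimes\LA$) that this bimodule is a multiple of the coarse one, mixing is immediate and is one of the two \emph{hypotheses} in the SAOP$\Rightarrow$AAP theorem, not SAOP itself. SAOP asks for $\langle x_n y,\, y x_n\rangle\to 0$ whenever $y\in M\ominus A$ and $(x_n)\subset M\ominus A$ is bounded and asymptotically $B$-central, i.e.\ $\|bx_n-x_nb\|_2\to 0$ for all $b\in B$. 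The roles of ``element of $B$'' and ``asymptotically central sequence'' are reversed relative to what you wrote, and the computation you would import from \cite{Brot_GJSW_max_inj} is of the latter type, not the former.

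The paper's actual mechanism is this. Because $\LM\ominus\LA$ is a multiple of the coarse bimodule $\LA\otimes\LA$, one first proves an abstract lemma: any bounded, asymptotically $B$-central sequence $(\xi_n)$ in $\LA\otimes\LA$ satisfies $\|(p\otimes 1)\xi_n\|\to 0$ and $\|(1\otimes p)\xi_n\|\to 0$ for every finite-rank projection $p$ on $\LA$. (Diffuseness of $B$ enters here, via a weakly-null unitary sequence in $B$ and an almost-orthogonality estimate of Houdayer.) Taking $p$ to be the projection onto $L_m=\spann(\cup^{\bu k}:k\le m)$, one may therefore assume each $x_n\in L_m^\perp\otimes V\otimes L_m^\perp$. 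A short diagrammatic computation then shows that for such $x_n$ and any $y\in L_m\otimes V\otimes L_m$ (dense in $M\ominus A$ as $m$ varies), the products $x_ny$ and $yx_n$ land in the mutually orthogonal subspaces $L_m^\perp\otimes V\otimes L_m$ and $L_m\otimes V\otimes L_m^\perp$. Your proposal is missing the finite-rank-projection reduction; the diagrammatic step you anticipate is the easy algebraic lemma, but pushing the $x_n$ into $L_m^\perp\otimes V\otimes L_m^\perp$ is where the passage from $A$ to an arbitrary diffuse $B$ is actually made.
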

This provides many examples of subalgebras of interpolated \FGF s with the AAP.
Note, it is still unknown if there exists a \suplal\ such that its associated \cusu\ is isomorphic to the generator or the radial masa.

\subsection*{Acknowledgement}
We express our gratitude to Cyril Houdayer and Jesse Peterson for encouragements and making comments on an earlier version of this manuscript.

\section{Preliminaries}

\subsection{Planar algebras}

A \plal\ is a collection of complex $*$-algebras $\Pl=(\Pl_n^\pm:n\gO)$ on which the set of shaded planar tangles acts.
See \cite{Jones_planar_algebra,jones_planar_algebra_II} for more details.
We follow similar conventions that was used in \cite{CJS} for drawing a shaded planar tangle.
We decorate strings with natural numbers to indicate that they represent a given number of parallel strings.
The distinguished interval of a box is decorated by a dollar sign if it is not at the top left corner.
We do not draw the outside box and will omit unnecessary decorations.
The left and right traces of a \plal\ are the maps $\tau_l:\Pl_n^\pm\loriar\Pl_0^\mp$ and $\tau_r:\Pl_n^\pm\loriar\Pl_0^\pm$ defined for any $n\gO$ such that 
$$\diagtaul \text{ and } \diagtaur \text{ for any } x\in \Pl_n^\pm.$$
Suppose that $\Pl_0^\pm=\C.$
The \plal\ is called spherical if the two traces agree on each element of $\Pl.$
We say that $\Pl$ is non-degenerate if the sesquilinear forms $(x,y)\mapsto \tau_l(xy^*)$ and $(x,y)\mapsto \tau_r(xy^*)$ are positive definite.
A \suplal\ is a \plal\ such that each space $\Pl_n^\pm$ is finite dimensional, $\Pl_0^\pm=\C$, $\Pl$ is spherical and non-degenerate.
The modulus of a \suplal\ is the value of a closed loop.

\subsection{Construction of a II$_1$ factor}

We recall a construction due to Jones et al. \cite{JSW}.
Consider the direct sum
$Gr\Pl=\bigoplus_{n\gO}\Pl^+_n$
that we equipped with the following Bacher product and involution:
$$\diagmult, \text{ and } x^\dag=\diagxdag, \text{where } x\in \Pl_n^+ \text{ and } y\in \Pl^+_m.$$
Consider the linear form $\tau: Gr\Pl\loriar\C$ that sends $x\in\Pl_0^+$ to itself and $0$ to any element in $\Pl_n^+$ if $n\neq 0.$
The vector space $Gr\Pl$ endowed with those operation is an associative $*$-algebra with a faithful tracial state.
Let $H$ be the completion of $Gr\Pl$ for the inner product $(x,y)\mapsto \tau(xy^*)$.
The left multiplication of $Gr\Pl$ on $H$ is bounded and defines a $*$-representation \cite{GJS_1,JSW}.
Let $M$ be the \vna\ generated by $Gr\Pl$ inside $B(H).$
It is an interpolated \FGF\ \cite{GJS_2,Hartglass_13_GJS}.
We define another multiplication on $Gr\Pl$ by requiring that if $x\in\Pl_n$ and $y\in\Pl_m$, then 
$$
\diagmultbullet \in \Pl^+_{n+m}.
$$
Denote by $x^{\bullet n}$ the n-th power of $x$ for this multiplication.
Remark, $\Vert a\bullet b\Vert_2=\Vert a\Vert_2\Vert b\Vert_2$, for all $a\in \Pl_n$ and $b\in\Pl_m$.
Therefore, this multiplication is a continuous bilinear form for the $L^2$-norm $\Vert\cdot\Vert_2$ of $M$.
We extend this operation on $L^2(M)\times L^2(M)$ and still denote it by $\bullet$.

\subsection{The cup subalgebra}

Let $\cup$ be the unity of the $*$-algebra $\Pl_1^+$, viewed as an element of $M$ \cite{GJS_1}.
Let $\AM$ be the von Neumann subalgebra generated by $\cup.$
We call it the \cusu.

\subsection{Strong asymptotic orthogonal property}

Popa introduced the notion of \AOP\ (AOP) in \cite{popa_max_inj}.
We consider a strengthening of this notion which was used by Houdayer and the second author \cite{Houdayer_Bogoljubov,Wen_Radial}.

\begin{defi}
Let $\AM$ be a diffuse subalgebra of a tracial \vna.
This inclusion has the \SAOP\ (SAOP) if for any free ultrafilter $\omega$ and any diffuse subalgebra $B\subset A$ we have 
$$xy\perp yx \text{ for any } x\in (M^\omega\cap B')\ominus A^\omega \text{ and } y\in M\ominus A.$$
\end{defi}

Note, a diffuse subalgebra $\AM$ has the SAOP if and only if it has the AOP relative to all of its diffuse subalgebras in the sense of Houdayer \cite[Definition 5.1]{Houdayer_Bogoljubov}.

The following theorem is an extension of a theorem of Popa \cite{popa_max_inj}.
\begin{theo}\label{theo:SAOP}\cite[Theorem 8.1]{Houdayer_Bogoljubov}
If $\AM$ is a diffuse subalgebra with the SAOP such that $L^2(M)\ominus\LA$ is a mixing $A$-bimodule (e.g. is a direct sum of the coarse bimodule $\LA\otimes \LA$), then it has the AAP.
\end{theo}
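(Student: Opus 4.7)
The plan is to prove the AAP by contradiction, along the lines of Popa's original maximal amenability argument. Suppose there exist a diffuse $B\subset A$ and an amenable intermediate algebra $B\subset D\subset M$ with $D\not\subset A$; the aim is to derive a contradiction from SAOP and mixing.

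First, I would extract from the amenability of $D$ a non-zero element $x_0\in (M^\omega\cap B')\ominus A^\omega$. By Connes' characterization, amenability of $D$ yields, for any finite $F\subset B$ and $\varepsilon>0$, an element of $D$ almost commuting with $F$; through the basic construction $\langle M,e_A\rangle$ and Popa's asymptotic homomorphism technique, this converts into an element $x\in M^\omega\cap D'\subset M^\omega\cap B'$ that is non-trivially positioned with respect to $A^\omega$ precisely because $D\not\subset A$. Setting $x_0:=x-E_{A^\omega}(x)$ and observing that $A\subset B'$ (since $B\subset A$ is abelian, so subtracting the $A^\omega$-part preserves membership in $B'$), we conclude $x_0\in (M^\omega\cap B')\ominus A^\omega$ and $x_0\neq 0$.

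Next, I would combine SAOP with the mixing bimodule structure. SAOP gives $\tau^\omega(y^*x_0^*yx_0)=0$ for every $y\in M\ominus A$. Since $x$ commutes with $D$, for $y\in D\cap (M\ominus A)$ one has $x_0 y - yx_0 = -[E_{A^\omega}(x),y]$, so expanding the inner product $\langle x_0 y, yx_0\rangle$ isolates a residual term involving $E_{A^\omega}(x)\in A^\omega$. Invoking the mixing hypothesis by averaging over a sequence of unitaries in $B$ converging weakly to zero (which acts trivially on $x_0$ but sends elements of $A$ to zero in the bimodule coefficient sense), this residual term is forced to vanish, giving $\|x_0 y\|_2=0$. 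Running $y$ over a generating set of $D\cap (M\ominus A)$, together with the diffuseness of $B$ inside the $A$-bimodule $L^2(M)\ominus L^2(A)$, yields $x_0=0$, the desired contradiction.

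The main obstacle is the first step: producing a non-trivial $x_0$. Amenability only guarantees approximately central elements, and a priori these could be absorbed into $A^\omega$; forcing non-triviality requires exploiting $D\not\subset A$ by choosing the approximate central net so that its $L^2$-projection onto $L^2(M)\ominus L^2(A)$ remains bounded below. This is the technical content of the cited theorem of Houdayer, and it is where the interplay between relative amenability of the inclusion $B\subset D\subset M$ and the Jones basic construction over $A$ really matters. The subsequent combination of SAOP and mixing is comparatively standard once such an $x_0$ is in hand.
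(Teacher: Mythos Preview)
The paper does not give its own proof of this statement: Theorem~\ref{theo:SAOP} is simply quoted from \cite[Theorem~8.1]{Houdayer_Bogoljubov} (with a pointer also to \cite[Proposition~2.1]{Wen_Radial}) and used as a black box in the proof of Theorem~A. So there is no in-paper argument to compare your proposal against.

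That said, your sketch has a genuine circularity. You correctly identify that the heart of the matter is producing a nonzero $x_0\in (M^\omega\cap B')\ominus A^\omega$ from the amenability of $D$ and the hypothesis $D\not\subset A$, and you then write that ``this is the technical content of the cited theorem of Houdayer.'' But the theorem you are trying to prove \emph{is} that cited theorem, so deferring the key step to it is not a proof. What actually happens in Houdayer's argument is that one uses diffuseness of $D'\cap D^\omega$ (from amenability of the diffuse algebra $D$) to get nontrivial central sequences inside $D^\omega\subset M^\omega$; these automatically commute with $B\subset D$, and the point is to show that if they all fell into $A^\omega$ then the mixing hypothesis would force $D\subset A$. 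The SAOP is then applied to the part orthogonal to $A^\omega$, and the commutation with $y\in D\ominus A$ (not merely with $B$) is what turns $xy\perp yx$ into $xy=0$, which is where the contradiction comes from. Your second paragraph gestures at this but the logic is not pinned down.

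A smaller issue: you justify $x_0\in B'$ by saying ``$B\subset A$ is abelian.'' The theorem does not assume $A$ abelian. The correct reason is that $E_{A^\omega}$ is $B$-bimodular (because $B\subset A$), so $x\in B'$ implies $E_{A^\omega}(x)\in B'$ and hence $x_0\in B'$.
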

See also \cite[Proposition 2.1]{Wen_Radial}.

\section{Proof of the main theorem}

\begin{prop}\label{prop}
Let $(A,\tau)$ be a tracial \vna\ and $B\subset A$ a diffuse subalgebra.
Denote by $L^2(A)$ the Gelfand-Naimark-Segal completion of $A$ for the trace $\tau$.
Consider a sequence $\xi=(\xi_n:n\gO)$ of unit vectors of the coarse bimodule $\LA\otimes\LA.$
Suppose that for any $b\in B$ we have $\limni\Vert b\cdot \xi_n-\xi_n\cdot b\Vert=0.$
Then, if $p\in B(\LA)$ is a finite rank projection, then $\limni\Vert (p\otimes 1)\xi_n\Vert=\limni\Vert (1\otimes p)\xi_n\Vert=0.$
\end{prop}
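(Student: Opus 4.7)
The plan is to identify the coarse bimodule $\LA \otimes \LA$ with the Hilbert--Schmidt operators $\mathcal{HS}(\LA)$ via $\xi \otimes \eta \mapsto |\xi\rangle\langle J\eta|$, where $J$ denotes the modular conjugation of $A$. This is an $A$-bimodule isometry under which the left and right $A$-actions become left and right multiplication of operators, so each $\xi_n$ corresponds to a $T_n \in \mathcal{HS}(\LA)$ with $\|T_n\|_{\mathrm{HS}} = 1$ and $\|bT_n - T_nb\|_{\mathrm{HS}} \to 0$ for every $b \in B$. The target estimates become $\|pT_n\|_{\mathrm{HS}} \to 0$ and $\|T_n JpJ\|_{\mathrm{HS}} \to 0$. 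Since $JpJ$ is again a finite-rank projection and $T_n^*$ satisfies the same asymptotic centrality hypothesis (as $B = B^*$), it will suffice to prove that $\mathrm{Tr}(p P_n) \to 0$ for every finite-rank projection $p$, where $P_n := T_nT_n^*$ is positive trace class with $\mathrm{Tr}(P_n) = 1$.

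The next step is to upgrade the asymptotic $B$-centrality of the vectors $\xi_n$ to asymptotic $B$-centrality of the densities $P_n$ in trace norm. Writing
\[
uP_n - P_nu = (uT_n - T_nu)T_n^* + T_n(uT_n^* - T_n^*u),
\]
together with the standard inequality $\|ST\|_1 \leq \|S\|_{\mathrm{HS}}\|T\|_{\mathrm{HS}}$, yields $\|uP_n - P_nu\|_1 \to 0$ for every $u \in B$. By Banach--Alaoglu applied to the trace class (the predual of the compacts), a subsequence $(P_{n_k})$ converges weak-$*$ to some positive trace-class operator $P$. Passing the commutators $[u, P_n]$ to the weak-$*$ limit---which is legitimate because $ux$ remains compact whenever $x$ is---shows $uP = Pu$ for every $u \in B$, i.e.\ $P \in B' \cap B(\LA)$.

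The crux is to force $P = 0$ using the diffuseness of $B$. Since $P$ is positive trace class, every non-zero eigenvalue of $P$ has a finite-dimensional eigenspace $V$, and $V$ is $B$-invariant because $P$ commutes with $B$; this yields a normal $*$-representation $\pi: B \to B(V)$ into a matrix algebra. The kernel of $\pi$ is a weak-$*$ closed two-sided ideal of $B$, hence of the form $Bz$ for some central projection $z \in B$, and $\pi(B) \cong B(1-z)$ would be a non-zero finite-dimensional corner of the diffuse algebra $B$---impossible, since any non-zero minimal projection of $B(1-z)$ would be a non-zero minimal projection of $B$ itself. Thus $\pi = 0$, and since $\pi(1) = \mathrm{id}_V$ this forces $V = 0$, so $P = 0$. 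A standard subsequence argument ($\mathrm{Tr}(p P_n)$ is bounded, and every subsequence admits a further subsequence on which the trace tends to $\mathrm{Tr}(p P) = 0$) then gives $\mathrm{Tr}(p P_n) \to 0$, as required. The main technical point to get right is the upgrade from $L^2$-asymptotic centrality of vectors to trace-norm asymptotic centrality of the density operators; once that is in hand, Banach--Alaoglu together with the diffuseness rigidity takes over.
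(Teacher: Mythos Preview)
Your argument is correct and genuinely different from the paper's. The paper proceeds by an explicit averaging trick: it reduces to a rank-one projection $p_\eta$, picks a weakly-null sequence of unitaries $(u_n)$ in $B$ (this is where diffuseness enters), invokes a quantitative almost-orthogonality lemma of Houdayer to bound $\sum_{i\le I}\|(p_{u_i\eta}\otimes 1)\xi_n\|^2$ uniformly, and then uses the asymptotic $B$-centrality to transport each summand back to $\|(p_\eta\otimes 1)\xi_n\|$; dividing by $I+1$ gives the conclusion. Your route is softer and more structural: identify the coarse bimodule with Hilbert--Schmidt operators, pass to the density operators $P_n=T_nT_n^*$, upgrade $L^2$-centrality to trace-norm centrality, take a weak-$*$ cluster point in $\mathcal S_1=\mathcal K^*$, and kill it using the fact that a diffuse von Neumann algebra admits no nonzero normal finite-dimensional representation. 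Your approach avoids the cited Houdayer lemma and is arguably cleaner; the paper's approach is more hands-on and yields explicit $\varepsilon$--$N$ estimates.

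Two small points to tidy up. First, $\mathcal S_1$ is the \emph{dual} of the compacts (and the predual of $B(H)$), not the predual of the compacts; the weak-$*$ topology you are using is $\sigma(\mathcal S_1,\mathcal K)$. Second, Banach--Alaoglu only gives convergent \emph{subnets} in general; you get honest subsequences when $L^2(A)$ is separable (which holds in the paper's application, since $A$ is singly generated). In the general case your final ``every subsequence has a further subsequence'' should be replaced by the equally standard ``every subsequence has a subnet along which $\mathrm{Tr}(pP_\cdot)\to 0$'', which still forces $\mathrm{Tr}(pP_n)\to 0$ since these are bounded real numbers.
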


\begin{proof}
Let $A,B,\xi,$ and $p$ as above.
It is sufficient to prove the proposition when $p$ is a rank one projection.
Let $\eta\in\LA$ be a unit vector such that $p=p_\eta$ is the rank one projection onto $\C \eta$.
Consider $0<\ep<1$ and a natural number $I$ such that $16/(I+1)<\ep.$
Since $B$ is diffuse, there exists a sequence of unitaries $(u_n)_n$ in $B$ such that $\limni \langle u_n\cdot\zeta_1,\zeta_2\rangle=0$ for any $\zeta_1,\zeta_2\in\LA.$
Consider the quantity $\delta=\max(\vert\langle u_n\cdot \eta,u_m\cdot \eta\rangle\vert: n\neq m, n,m\leqslant I)$.
By \cite[Proposition 2.3]{Houdayer_12_maxinj}, we have that 
$$\sum_{i=0}^I \Vert (p_{u_i\cdot \eta} \otimes 1) \xi_n\Vert^2\leqslant g(\delta) \Vert\xi_n\Vert^2 \text{ for any } n\gO,$$
where $g$ is a positive function satisfying $\lim_{\delta\rightarrow 0}g(\delta)=1.$
Hence, there exists a subsequence $(v_n)_n$ such that 
$$\sum_{i=0}^I \Vert (p_{v_i\cdot \eta} \otimes 1) \xi_n\Vert^2\leqslant 2\Vert\xi_n\Vert^2=2 \text{ for any } n\gO.$$
Let $\lambda:B\loriar B(\LA\otimes\LA)$ be the left action of $B$ on the coarse bimodule $\LA\otimes\LA.$
Observe, $p_{v_i\cdot \eta}\otimes 1=\lambda(v_i)\circ (p_\eta \otimes 1) \circ \lambda(v_i)^*$ and $v_i$ is a unitary, for any $i\gO.$
Therefore, $\Vert (p_{v_i\cdot \eta} \otimes 1) \xi_n\Vert= \Vert(p_\eta\otimes 1) v_i^*\cdot \xi_n \Vert$ for any $n,i\gO.$
By assumption, there exists $N>0$ such that for any $n\geqslant N$ and $i\leqslant I$ we have 
$\Vert v_i^*\cdot \xi_n - \xi_n\cdot v_i^* \Vert<\ep/4.$
Therefore,
\begin{align*}
\Vert (p_\eta\otimes 1) \xi_n \Vert & = \Vert (p_\eta\otimes 1) (\xi_n\cdot v_i^*) \Vert\\
& \leqslant \Vert (p_\eta\otimes 1) ( v_i^*\cdot \xi_n - \xi_n\cdot v_i^*) \Vert + \Vert (p_\eta\otimes 1) (v_i^*\cdot\xi_n) \Vert\\
&\leqslant \ep/4 + \Vert (p_{v_i\cdot\eta}\otimes 1) \xi_n\Vert \text{ for any } n\geqslant N, i\leqslant I.
\end{align*}
We obtain
\begin{align*}
\sum_{i=0}^I  \Vert (p_{\eta} \otimes 1) \xi_n\Vert^2 & \leqslant \sum_{i=0}^I (\ep^2/16 + \ep/2 \Vert (p_{v_i\cdot \eta}\otimes 1) \xi_n \Vert + \Vert (p_{v_i\cdot \eta}\otimes 1) \xi_n \Vert^2)\\
& \leqslant (I+1)(\ep/16 + \ep/2) + 2 \text{ for any } n\geqslant N.
\end{align*}
Therefore, $\Vert (p_{\eta} \otimes 1) \xi_n\Vert^2\leqslant \ep/16+\ep/2 + 2\ep/16\leqslant \ep$ for any $n\geqslant N.$
The same proof shows that there exists $M>0$ such that for any $n\geqslant M$ we have $\Vert (1\otimes p_\eta)\xi_n\Vert^2\leqslant \ep.$
This proves the proposition.
\end{proof}

Fix a \suplal\ $\Pl$ with modulus $\delta>1$ and denote by $\AM$ its associated \cusu.
Consider the subspace $V_n\subset \Pl_n^+,n\gO$ of elements that vanishes when they are capped off on the top left corner and vanished when they are capped off on the top right corner.
Let $V\subset \LM$ be their orthogonal direct sum.
By \cite[Theorem 4.9]{JSW}, the following map is an isomorphism of $A$-bimodules:
$$\phi:\LA\oplus(\LA\otimes V\otimes\LA)\loriar \LM, a+b\otimes v\otimes c\longmapsto a+b\bullet v\bullet c.$$
This implies that the $A$-bimodule $\LM\ominus\LA$ is isomorphic to an infinite direct sum of the coarse bimodule.
We identify $\LM$ with $\phi^{-1}(\LM).$

Consider the finite dimensional subspace $L_m=\spann(\cup^{\bullet k}:k\leqslant m)\subset A$ for $m\gO,$
where $\cup^{\bullet 0}=1\in \Pl_0^+$.
Denote by $L_m^\perp$ the orthogonal complement of $L_m$ inside $\LA$ for any $m\gO.$

\begin{lemma}\label{lemma}
Let $m\gO$ and $x\in M\cap L_m^\perp\otimes V\otimes L_m^\perp$, $y\in M\cap L_m\otimes V\otimes L_m$.
Then $xy\in L_m^\perp\otimes V\otimes L_m$ and $yx\in L_m\otimes V\otimes L_m^\perp.$
In particular, $xy\perp yx.$
\end{lemma}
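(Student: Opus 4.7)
The plan is to expand $xy$ via the Bacher product and control each term using the $V$-property of $v_1,v_2$. By bilinearity and $L^2$-continuity I would first reduce to $x=\cup^{\bullet k_1}\bu v_1\bu \cup^{\bullet j_1}$ and $y=\cup^{\bullet k_2}\bu v_2\bu \cup^{\bullet j_2}$ with $v_i\in V_{n_i}$, $k_1,j_1>m$, and $k_2,j_2\leqslant m$. This reduction uses that $(\cup^{\bullet k})_{k\gO}$ is an orthogonal family in $\LA$, so $L_m^\perp$ is the closed linear span of the $\cup^{\bullet k}$ with $k>m$. The Bacher product then writes $xy=\sum_a(xy)_a$, where $(xy)_a$ is the diagram obtained by placing $x$ to the left of $y$ and joining the rightmost $a$ strings of the top of $x$ with the leftmost $a$ strings of the top of $y$ by $a$ nested arcs.

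The core step, and what I expect to be the main technical obstacle, is to prove that $(xy)_a=0$ as soon as $a\geqslant 2k_2+2$. Among the nested arcs, the $(2k_2+1)$-th and $(2k_2+2)$-th counted from the innermost connect the two leftmost strings of $v_2$ in $y$ (at positions $2k_2+1$ and $2k_2+2$ from the left of the top of $y$) to the two strings forming the $(k_2+1)$-th cup of $c_1$ from the right; this cup exists precisely because $k_2\leqslant m<j_1$. Composing these two arcs with that cup produces an external bridge joining the two leftmost strings of $v_2$, which topologically equals the top-left cap of $v_2$. Since $v_2\in V$ this cap evaluates to zero. Carrying out this string-position bookkeeping cleanly is the delicate combinatorial point of the proof.

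For $a\leqslant 2k_2+1$ the arcs stay inside the $c_1$ region on $x$'s side (since $2k_2+1<2j_1$, using $k_2<j_1$) and touch at most one string of $v_2$ on $y$'s side; in particular $b_1=\cup^{\bullet k_1}$ and $c_2=\cup^{\bullet j_2}$ remain intact on the far left and right of $(xy)_a$. Via $\phi$ this term then decomposes as $b_1\bu\widetilde{v}_a\bu c_2$, with $\widetilde{v}_a\in V$ because its leftmost two strings are the leftmost two of $v_1\in V$ and its rightmost two are the rightmost two of $v_2\in V$. Summing over $a$ yields $xy\in L_m^\perp\otimes V\otimes L_m$.

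The inclusion $yx\in L_m\otimes V\otimes L_m^\perp$ follows from the mirror-image argument, the vanishing terms now being those with $a\geqslant 2j_2+2$ and the cap produced on the top-right of $v_1$ via the $(j_2+1)$-th cup of $b_1$ from the left, which exists since $j_2\leqslant m<k_1$. Finally $xy\perp yx$ is immediate because $\phi$ transports the orthogonal decomposition $\LA=L_m\oplus L_m^\perp$ to an orthogonal decomposition of $\LM$, making $L_m^\perp\otimes V\otimes L_m$ and $L_m\otimes V\otimes L_m^\perp$ mutually orthogonal.
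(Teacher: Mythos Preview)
Your proposal is correct and follows essentially the same approach as the paper. Both arguments reduce by density to elementary tensors and compute the Bacher product term by term; the paper records the outcome as the closed formula $xy=\sum_{i}\delta^{[i/2]}\cup^{\bu k}\bu v\bu\cup^{\bu(l+s-i)}\bu w\bu\cup^{\bu t}$, while you spell out diagrammatically why the sum truncates (the cap forced on the top-left of $v_2$ once $a\geqslant 2k_2+2$, using $k_2\leqslant m<j_1$) and why each surviving term lies in $L_m^\perp\otimes V\otimes L_m$, which is precisely the mechanism underlying that formula.
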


\begin{proof}
Consider $x= \cup^{\bu k} \bu v\bu \cup^{\bu l}$ and $y = \cup^{\bu s} \bu w\bu \cup^{\bu t}$, where $s,t<m+1\leqslant k,l$ and $v,w\in V\cap Gr\Pl.$
We have that 
$$xy=\sum_{i=0}^{s+1} \delta^{[i/2]}\cup^{\bu k}\bu v\bu \cup^{\bu (l+s-i)} \bu w\bu \cup^{\bu t},$$
where $[i/2]=i/2$ if $i$ is even and $i/2-1/2$ if $i$ is odd.
Observe, $L_m^\perp$ is equal to the closure of $\spann(\cup^{\bu k}:k\geqslant m+1).$
Therefore, $xy\in L_m^\perp\otimes V\otimes L_m$ and similarly $yx\in L_m\otimes V\otimes L_m^\perp.$
The space $M\cap  L_m^\perp\otimes V\otimes L_m$ (resp. $M\cap  L_m\otimes V\otimes L_m^\perp$) is the weak closure of $\spann(\cup^{\bu k} \bu v\bu \cup^{\bu l}:k,l\geqslant m+1,v\in V\cap Gr\Pl)$ (resp. $\spann(\cup^{\bu s} \bu w\bu \cup^{\bu t}:s,t\leqslant m,w\in V\cap Gr\Pl)$).
This concludes the proof by a density argument.
\end{proof}

We are ready to prove the main theorem of the article.
\begin{proof}[Proof of Theorem \ref{theo:A}]
Let $\Pl$ be a \suplal, $\AM$ its associated \cusu, and $B\subset A$ a diffuse subalgebra.
Consider $x\in M^\omega\ominus A^\omega$ in the relative commutant of $B$ and $y\in M\ominus A$, where $\omega$ is a free ultrafilter on $\mathbb N.$
Let us show that $xy\perp yx.$
Observe, $Gr\Pl$ is a weakly dense $*$-subalgebra of $M.$
Therefore, we can assume that $y\in Gr\Pl$ by Kaplansky density theorem.
This implies that there exists $m\gO$ such that $y\in Gr\Pl\cap L_m\otimes V\otimes L_m.$
Let $(x_n)_n$ be a representative of $x$ in the ultrapower $M^\omega.$
We can assume that for any $n\gO$ we have $x_n\in \LM\ominus\LA.$
Let $p\in B(\LA)$ be the orthogonal projection onto $L_m$.
It is a finite rank projection.
Therefore, by Proposition \ref{prop}, $(p\otimes 1)x=(1\otimes p)x=0$. 
Hence, we can assume that $x_n\in L_m^\perp\otimes V\otimes L_m^\perp$ for any $n\gO.$
Lemma \ref{lemma} implies that $x_ny\perp yx_n$ for any $n\gO.$
This implies that $xy\perp xy.$

Theorem \ref{theo:SAOP} implies that $\AM$ has the AAP.
\end{proof}

\bibliographystyle{alpha}

\end{document}